\newtheorem{theorem}{Theorem}[section]
\numberwithin{equation}{section}
\begin{document}

\title{Some Combinatorial Identities some of which involving Harmonic Numbers}

\author{M.J. Kronenburg}
\date{}

\maketitle

\begin{abstract}
A product difference equation is proved and used for derivation
by elementary methods of four combinatorial identities,
eight combinatorial identities involving generalized harmonic numbers
and eight combinatorial identities involving classical harmonic numbers.
For the binomial coefficients the definition with gamma functions is used,
thus also allowing non-integer arguments in the identities.
The generalized harmonic numbers in this case are harmonic numbers with a complex offset,
where the classical harmonic numbers are a special case with offset zero.
\end{abstract}

\noindent
\textbf{Keywords}: binomial coefficient, combinatorial identities,
harmonic number.\\
\textbf{MSC 2010}: 05A10, 05A19

\section{Introduction}

Binomial coefficients and their combination with harmonic numbers occur
frequently in applied mathematics \cite{GKP94,K97,L07,S90}.
Many ways to find and prove these identities can be found in literature
\cite{CY08,M02,PS03,PWZ96,Sip,WTS00}.
In this paper a product difference equation is used for derivation of
a number of such, often new, identities.

\section{A Product Difference Equation}

\begin{theorem}
Let $n$ be a nonnegative integer, and let $x$ and $y$ be complex variables
and let the set $\{z_k\}$ be $n$ complex variables
and the set $\{w_k\}$ be n complex variables.
Then there is the following product difference equation:
\begin{equation}\label{proddif}
\begin{split}
    & \prod_{k=1}^{n}(x-z_k)^{w_k}-\prod_{k=1}^{n}(y-z_k)^{w_k} \\
={} & \sum_{k=1}^{n}[(x-z_k)^{w_k}-(y-z_k)^{w_k}]
      \prod_{l=1}^{k-1}(x-z_l)^{w_l} \cdot \prod_{l=k+1}^{n}(y-z_l)^{w_l}
\end{split}
\end{equation}
where a sum of zero terms is zero and a product of zero terms is one.
\end{theorem}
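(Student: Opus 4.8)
The plan is to recognize the right-hand side as a telescoping sum. To keep the bookkeeping manageable, I would first abbreviate $a_k = (x-z_k)^{w_k}$ and $b_k = (y-z_k)^{w_k}$, so that the identity to be proved reads $\prod_{k=1}^{n} a_k - \prod_{k=1}^{n} b_k = \sum_{k=1}^{n}(a_k-b_k)\prod_{l=1}^{k-1}a_l\prod_{l=k+1}^{n}b_l$. Nothing in the argument uses the specific form of $a_k$ and $b_k$: the identity is purely algebraic in two arbitrary finite sequences, and the powers $(x-z_k)^{w_k}$ merely supply particular values.

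Next I would introduce the hybrid products $P_k = \prod_{l=1}^{k}a_l\prod_{l=k+1}^{n}b_l$ for $k=0,1,\dots,n$, which interpolate between the two products on the left: with the stated conventions (empty product equal to one) one has $P_0 = \prod_{l=1}^{n}b_l$ and $P_n = \prod_{l=1}^{n}a_l$. The key observation is that the $k$-th summand on the right is exactly $P_k - P_{k-1}$. Indeed, expanding $(a_k-b_k)$ against the common factor $\prod_{l=1}^{k-1}a_l\prod_{l=k+1}^{n}b_l$ and absorbing $a_k$ into the left product (respectively $b_k$ into the right product) gives $a_k\prod_{l=1}^{k-1}a_l\prod_{l=k+1}^{n}b_l = P_k$ and $b_k\prod_{l=1}^{k-1}a_l\prod_{l=k+1}^{n}b_l = P_{k-1}$.

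With this in hand the sum collapses, $\sum_{k=1}^{n}(P_k-P_{k-1}) = P_n - P_0 = \prod_{l=1}^{n}a_l - \prod_{l=1}^{n}b_l$, which is precisely the left-hand side, completing the proof. An equivalent route is induction on $n$: peeling the factor $a_1$ (resp.\ $b_1$) off the front and writing $\prod_{k}a_k - \prod_{k}b_k = a_1\bigl(\prod_{k\ge2}a_k - \prod_{k\ge2}b_k\bigr) + (a_1-b_1)\prod_{k\ge2}b_k$ produces the $k=1$ term of the sum plus $a_1$ times the $n-1$ case, which is exactly the remaining terms.

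I expect no genuine difficulty here; the argument is essentially a single observation. The only points requiring care are the degenerate boundaries — the $n=0$ case (both sides empty and equal to $0$) and the $k=1$ and $k=n$ summands, where one of the partial products $\prod_{l=1}^{k-1}a_l$ or $\prod_{l=k+1}^{n}b_l$ is empty — all of which come out correctly under the convention that an empty product is one and an empty sum is zero.
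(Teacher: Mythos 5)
Your proof is correct and is essentially the same as the paper's: both rewrite each summand as a difference of two hybrid products and let the sum telescope, your $P_k = \prod_{l=1}^{k}a_l\prod_{l=k+1}^{n}b_l$ being exactly the quantities whose pairwise cancellation the paper describes after its expansion in (\ref{proddifproof}). The explicit naming of $P_k$ (and the optional inductive variant) are only presentational differences, not a different argument.
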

\begin{proof}
Let $\alpha_k=(x-z_k)^{w_k}$ and $\beta_k=(y-z_k)^{w_k}$.
Then (\ref{proddif}) becomes:
\begin{equation}\label{proddifproof}
\begin{split}
 \prod_{k=1}^{n}\alpha_k - \prod_{k=1}^{n}\beta_k 
& = \sum_{k=1}^{n}(\alpha_k - \beta_k)
        \prod_{l=1}^{k-1}\alpha_l \cdot \prod_{l=k+1}^{n}\beta_l \\
& = \sum_{k=1}^{n}( \prod_{l=1}^{k}\alpha_l \cdot \prod_{l=k+1}^{n}\beta_l -
                  \prod_{l=1}^{k-1}\alpha_l \cdot \prod_{l=k}^{n}\beta_l )
\end{split}
\end{equation}
On the right side the first product for each $k$ cancels the second product
for each $k+1$, only leaving the first product for $k=n$ and the second product for $k=1$.
These two remaining products are the two products on the left side.
\end{proof}

In the case that all $z_k=0$ and all $w_k=1$, (\ref{proddif}) reduces to
the following power difference equation \cite{RW04}:
\begin{equation}\label{powdif}
  x^n - y^n = (x-y) \sum_{k=1}^{n}x^{k-1}y^{n-k}
\end{equation}

\section{Combinatorial Identities}

The definition of the binomial coefficient in terms of gamma functions
for complex $x$, $y$ is \cite{AS70,GS97,GKP94}:
\begin{equation}\label{binomdef}
 \binom{x}{y} = \frac{\Gamma(x+1)}{\Gamma(y+1)\Gamma(x-y+1)}
\end{equation}
For nonnegative integer $n$ and integer $k$ this reduces to \cite{AS70,G72,GKP94,K97}:
\begin{equation}\label{binomidef}
 \binom{n}{k} =
 \begin{cases}
   \dfrac{n!}{k!(n-k)!} & \text{if $0\leq k\leq n$} \\
   0 & \text{otherwise} \\
 \end{cases}
\end{equation}
From the recurrence formula for the gamma function for complex $s$ \cite{AS70,AAR99,GR07,RW04,WW06}:
\begin{equation}\label{gammadef}
 \Gamma(s+1) = s\,\Gamma(s)
\end{equation}
follows for integer $a\leq b$ and complex $s$:
\begin{equation}\label{combiprod}
 \prod_{k=a}^{b-1}(s-k) = \dfrac{\Gamma(s-a+1)}{\Gamma(s-b+1)}
    = (-1)^{b-a}\dfrac{\Gamma(b-s)}{\Gamma(a-s)}
\end{equation}
Combination of (\ref{proddif}) with the $\{z_k\}$ consecutive integers and all $w_k=w$
and (\ref{binomdef}) and one of the right side expressions of (\ref{combiprod}) for $x$
and for $y$ in (\ref{proddif}) yields the following four combinatorial identities.\\
For integer $a\leq b$ and complex $w$, $x$, $y$:
\begin{equation}\label{binomial1}
\begin{split}
 & \sum_{k=a}^{b-1}[(x+k+1)^w-(x-y+k)^w] \binom{x+k}{y}^w \\
={} & (y+1)^w \left[ \binom{x+b}{y+1}^w - \binom{x+a}{y+1}^w \right]
\end{split}
\end{equation}
\begin{equation}\label{binomial2}
\begin{split}
 & \sum_{k=a}^{b-1}[(y-x+k)^w-(y+k)^w] (-1)^{wk}\binom{x}{y+k}^w \\
={} & x^w \left[ (-1)^{wb}\binom{x-1}{y+b-1}^w - (-1)^{wa}\binom{x-1}{y+a-1}^w \right]
\end{split}
\end{equation}
\begin{equation}\label{binomial3}
\begin{split}
    & \sum_{k=a}^{b-1}[(-y-k-1)^w-(x-y-k+1)^w]
      (-1)^{wk}\binom{x}{y+k}^{-w} \\
={} & (x+1)^w
     \left[ (-1)^{wb}\binom{x+1}{y+b}^{-w} - (-1)^{wa}\binom{x+1}{y+a}^{-w} \right]
\end{split}
\end{equation}
\begin{equation}\label{binomial4}
\begin{split}
    & \sum_{k=a}^{b-1}[(x-k)^w-(y-k+1)^w]
      \binom{x}{k}^w \binom{y}{k}^{-w} \\
={} & (y+1)^w
     \left[ \binom{x}{b}^w \binom{y+1}{b}^{-w} -
            \binom{x}{a}^w \binom{y+1}{a}^{-w} \right]
\end{split}
\end{equation}
For $w=1$ these identities reduce to:
\begin{equation}\label{binomial1w1}
 \sum_{k=a}^{b-1} \binom{x+k}{y} = \binom{x+b}{y+1} - \binom{x+a}{y+1}
\end{equation}
\begin{equation}\label{binomial2w1}
 \sum_{k=a}^{b-1} (-1)^{k}\binom{x}{y+k} 
 = (-1)^{a}\binom{x-1}{y+a-1} - (-1)^{b}\binom{x-1}{y+b-1}
\end{equation}
\begin{equation}\label{binomial3w1}
 \sum_{k=a}^{b-1} (-1)^{k}\binom{x}{y+k}^{-1} 
 = \frac{x+1}{x+2} \left[ (-1)^{a}\binom{x+1}{y+a}^{-1} 
   - (-1)^{b}\binom{x+1}{y+b}^{-1} \right]
\end{equation}
\begin{equation}\label{binomial4w1}
 \sum_{k=a}^{b-1} \binom{x}{k}\binom{y}{k}^{-1} 
 = \frac{y+1}{x-y-1} \left[ \binom{x}{b} \binom{y+1}{b}^{-1} 
  - \binom{x}{a} \binom{y+1}{a}^{-1} \right]
\end{equation}
which are equivalent to known identities \cite{G72}.\\
For an example of $w=2$, let be given \cite{G72,K97}:
\begin{equation}\label{example1}
 \sum_{k=0}^{n} \binom{n}{k}^2 = \binom{2n}{n}
\end{equation}
Then (\ref{binomial2}) with $w=2$, $a=0$, $b=n+1$, $x=n$, $y=0$ and (\ref{example1}) yields:
\begin{equation}
 \sum_{k=0}^{n} k \binom{n}{k}^2 = \frac{n}{2} \binom{2n}{n}
\end{equation}
For an example of $w=3$, let a formula from A.C. Dixon be given \cite{G72,G04,G08,PWZ96}:
\begin{equation}\label{example2}
 \sum_{k=0}^{2n} (-1)^k \binom{2n}{k}^3 = (-1)^n \binom{2n}{n}\binom{3n}{n}
\end{equation}
Then (\ref{binomial2}) with $w=3$, $a=0$, $b=2n+1$, $x=2n$, $y=0$ and (\ref{example2}) yields:
\begin{equation}
 \sum_{k=0}^{2n} (-1)^k k(2n-k)\binom{2n}{k}^3 = (-1)^n \frac{4}{3}n^2\binom{2n}{n}\binom{3n}{n}
\end{equation}

\section{Combinatorial Identities with Harmonic Numbers}

The definition of the harmonic numbers for nonnegative integer $n$ is \cite{K97,L07}:
\begin{equation}\label{harmdef}
 H_n = \sum_{k=1}^{n} \frac{1}{k}
\end{equation}
from which follows that $H_0=0$.
The definition of the generalized harmonic numbers
for nonnegative integer $n$, complex order $m$ and
complex offset $c$ is \cite{LLF05,L07}:
\begin{equation}\label{genharmdef}
 H_{c,n}^{(m)} = \sum_{k=1}^{n}\frac{1}{(c+k)^m}
\end{equation}
from which follows that $H_{c,0}^{(m)}=0$.
There is the following symmetry formula:
\begin{equation}\label{genharmsym}
 H_{c,n}^{(m)} = (-1)^m H_{-(c+n+1),n}^{(m)}
\end{equation}
For order $m=1$ and nonnegative integer offset $c=p$ these numbers can be expressed
in the classical harmonic numbers of (\ref{harmdef}):
\begin{equation}\label{genharmidef}
 H_{p,n}^{(1)} = H_{p+n} - H_p
\end{equation}
from which follows:
\begin{equation}\label{genharmzero}
 H_{0,n}^{(1)} = H_n
\end{equation}
Identities involving these numbers can be obtained
by differentiation of the product (\ref{combiprod})
and using (\ref{genharmsym}):
\begin{equation}\label{difpow}
 \frac{d}{dx} \prod_{k=a}^{b-1}(x-k)^w
 = w H_{x-b,b-a}^{(1)} \prod_{k=a}^{b-1}(x-k)^w 
 = -w H_{a-x-1,b-a}^{(1)} \prod_{k=a}^{b-1}(x-k)^w
\end{equation}
and similarly for $y$.
These identities therefore are in pairs:
one from differentiation to $x$ and one from differentiation to $y$,
in each case using one of the right side expressions in (\ref{difpow}).
This results in the following eight combinatorial identities involving
generalized harmonic numbers.\\
For integer $a\leq b $ and complex $w$, $x$, $y$:
\begin{equation}\label{harmonic1a}
\begin{split}
    & \sum_{k=a}^{b-1}[(x+k+1)^w H_{x+a,k-a+1}^{(1)} - (x-y+k)^w H_{x+a,k-a}^{(1)}]
      \binom{x+k}{y}^w \\
={} & (y+1)^w \binom{x+b}{y+1}^w H_{x+a,b-a}^{(1)}
\end{split}
\end{equation}
\begin{equation}\label{harmonic1b}
\begin{split}
    & \sum_{k=a}^{b-1}[(x+k+1)^w H_{y-x-b,b-k-1}^{(1)} - (x-y+k)^w H_{y-x-b,b-k}^{(1)}]
      \binom{x+k}{y}^w \\
={} & -(y+1)^w \binom{x+a}{y+1}^w H_{y-x-b,b-a}^{(1)}
\end{split}
\end{equation}
\begin{equation}\label{harmonic2a}
\begin{split}
    & \sum_{k=a}^{b-1}[(y-x+k)^w H_{y-x+a-1,k-a+1}^{(1)} - (y+k)^w H_{y-x+a-1,k-a}^{(1)}] \\
    & \cdot  (-1)^{wk}\binom{x}{y+k}^w 
    = x^w (-1)^{wb}\binom{x-1}{y+b-1}^w H_{y-x+a-1,b-a}^{(1)}
\end{split}
\end{equation}
\begin{equation}\label{harmonic2b}
\begin{split}
    & \sum_{k=a}^{b-1}[(y-x+k)^w H_{-y-b,b-k-1}^{(1)} - (y+k)^w H_{-y-b,b-k}^{(1)}] \\
    & \cdot  (-1)^{wk}\binom{x}{y+k}^w 
    = -x^w (-1)^{wa}\binom{x-1}{y+a-1}^w H_{-y-b,b-a}^{(1)}
\end{split}
\end{equation}
\begin{equation}\label{harmonic3a}
\begin{split}
    & \sum_{k=a}^{b-1}[(-y-k-1)^w H_{y+a,k-a+1}^{(1)} - (x-y-k+1)^w H_{y+a,k-a}^{(1)}] \\
    & \cdot  (-1)^{wk}\binom{x}{y+k}^{-w} 
    = (x+1)^w (-1)^{wb}\binom{x+1}{y+b}^{-w} H_{y+a,b-a}^{(1)}
\end{split}
\end{equation}
\begin{equation}\label{harmonic3b}
\begin{split}
    & \sum_{k=a}^{b-1}[(-y-k-1)^w H_{x-y-b+1,b-k-1}^{(1)} - (x-y-k+1)^w H_{x-y-b+1,b-k}^{(1)}] \\
    & \cdot  (-1)^{wk}\binom{x}{y+k}^{-w}
    = -(x+1)^w (-1)^{wa}\binom{x+1}{y+a}^{-w} H_{x-y-b+1,b-a}^{(1)}
\end{split}
\end{equation}
\begin{equation}\label{harmonic4a}
\begin{split}
    & \sum_{k=a}^{b-1}[(x-k)^w H_{a-x-1,k-a+1}^{(1)} - (y-k+1)^w H_{a-x-1,k-a}^{(1)}]
      \binom{x}{k}^{w} \binom{y}{k}^{-w}\\
={} & (y+1)^w \binom{x}{b}^{w} \binom{y+1}{b}^{-w} H_{a-x-1,b-a}^{(1)}
\end{split}
\end{equation}
\begin{equation}\label{harmonic4b}
\begin{split}
    & \sum_{k=a}^{b-1}[(x-k)^w H_{y-b+1,b-k-1}^{(1)} - (y-k+1)^w H_{y-b+1,b-k}^{(1)}]
      \binom{x}{k}^{w} \binom{y}{k}^{-w}\\
={} & -(y+1)^w \binom{x}{a}^{w} \binom{y+1}{a}^{-w} H_{y-b+1,b-a}^{(1)}
\end{split}
\end{equation}
For $w=1$ these identities reduce to:
\begin{equation}\label{harmonic1aw1}
\begin{split}
    & \sum_{k=a}^{b-1}\binom{x+k}{y} H_{x+a,k-a}^{(1)} \\
={} & \binom{x+b}{y+1} ( H_{x+a,b-a}^{(1)} - \frac{1}{y+1} )
    + \frac{1}{y+1} \binom{x+a}{y+1}
\end{split}
\end{equation}
\begin{equation}\label{harmonic1bw1}
\begin{split}
    & \sum_{k=a}^{b-1}\binom{x+k}{y} H_{y-x-b,b-k-1}^{(1)} \\
={} & -\binom{x+a}{y+1} ( H_{y-x-b,b-a}^{(1)} - \frac{1}{y+1} ) 
    - \frac{1}{y+1} \binom{x+b}{y+1}
\end{split}
\end{equation}
\begin{equation}\label{harmonic2aw1}
\begin{split}
    & \sum_{k=a}^{b-1}(-1)^k\binom{x}{y+k} H_{y-x+a-1,k-a}^{(1)} \\
={} & (-1)^{b+1}\binom{x-1}{y+b-1} ( H_{y-x+a-1,b-a}^{(1)} + \frac{1}{x} )
    + \frac{(-1)^a}{x} \binom{x-1}{y+a-1}
\end{split}
\end{equation}
\begin{equation}\label{harmonic2bw1}
\begin{split}
    & \sum_{k=a}^{b-1}(-1)^k\binom{x}{y+k} H_{-y-b,b-k-1}^{(1)} \\
={} & (-1)^a\binom{x-1}{y+a-1} ( H_{-y-b,b-a}^{(1)} + \frac{1}{x} )
    - \frac{(-1)^b}{x} \binom{x-1}{y+b-1}
\end{split}
\end{equation}
\begin{equation}\label{harmonic3aw1}
\begin{split}
   & \sum_{k=a}^{b-1}(-1)^k\binom{x}{y+k}^{-1} H_{y+a,k-a}^{(1)} \\
={} & \frac{x+1}{x+2} \left[ (-1)^{b+1}\binom{x+1}{y+b}^{-1} ( H_{y+a,b-a}^{(1)} - \frac{1}{x+2} )
   - \frac{(-1)^a}{x+2} \binom{x+1}{y+a}^{-1} \right]
\end{split}
\end{equation}
\begin{equation}\label{harmonic3bw1}
\begin{split}
   & \sum_{k=a}^{b-1}(-1)^k\binom{x}{y+k}^{-1} H_{x-y-b+1,b-k-1}^{(1)} \\
={} & \frac{x+1}{x+2} \left[ (-1)^{a}\binom{x+1}{y+a}^{-1} ( H_{x-y-b+1,b-a}^{(1)} - \frac{1}{x+2} )
   + \frac{(-1)^b}{x+2} \binom{x+1}{y+b}^{-1} \right]
\end{split}
\end{equation}
\begin{equation}\label{harmonic4aw1}
\begin{split}
   & \sum_{k=a}^{b-1}\binom{x}{k}\binom{y}{k}^{-1} H_{a-x-1,k-a}^{(1)} = \frac{y+1}{x-y-1} \\
   & \cdot \left[ \binom{x}{b}\binom{y+1}{b}^{-1} ( H_{a-x-1,b-a}^{(1)} + \frac{1}{x-y-1} )
   - \frac{1}{x-y-1} \binom{x}{a}\binom{y+1}{a}^{-1} \right]
\end{split}
\end{equation}
\begin{equation}\label{harmonic4bw1}
\begin{split}
   & \sum_{k=a}^{b-1}\binom{x}{k}\binom{y}{k}^{-1} H_{y-b+1,b-k-1}^{(1)} = \frac{y+1}{y-x+1} \\
   & \cdot \left[ \binom{x}{a}\binom{y+1}{a}^{-1} ( H_{y-b+1,b-a}^{(1)} - \frac{1}{y-x+1} )
   + \frac{1}{y-x+1} \binom{x}{b}\binom{y+1}{b}^{-1} \right]
\end{split}
\end{equation}
Changing all $k$ into $a+b-k-1$ in the summation term reverses the direction of summation
and yields valid identities. When the offset of the generalized harmonic numbers in these
identities is zero, (\ref{harmonic1a}), (\ref{harmonic1aw1}), (\ref{harmonic3aw1}) and (\ref{harmonic4bw1})
yield one classical harmonic number identity
and three combinatorial identities involving classical harmonic numbers.\\
For nonnegative integer $n$ and complex $m$, $w$:
\begin{equation}\label{harmonictrad1}
 \sum_{k=0}^{n} \left[(k+1)^w-k^w\right] H_k = (n+1)^w H_{n+1} - H_{0,n+1}^{(1-w)}
\end{equation}
\begin{equation}\label{harmonictrad2}
 \sum_{k=0}^{n}\binom{k}{m}H_k = \binom{n+1}{m+1}(H_{n+1} - \frac{1}{m+1})
 + \frac{1}{m+1}\binom{0}{m+1}
\end{equation}
\begin{equation}\label{harmonictrad3}
 \sum_{k=0}^{n}\binom{m}{n-k}\binom{n}{k}^{-1}H_k 
 = \frac{n+1}{n-m+1} \left[ H_{n+1} + \frac{1}{n-m+1}(\binom{m}{n+1}-1) \right]
\end{equation}
\begin{equation}\label{harmonictrad4}
  \sum_{k=0}^{n}(-1)^k\binom{m}{k}^{-1}H_k 
 = \frac{m+1}{m+2} \left[ (-1)^n\binom{m+1}{n+1}^{-1}(H_{n+1} - \frac{1}{m+2})
   -\frac{1}{m+2} \right]
\end{equation}
The identity (\ref{harmonictrad1}) is an example of harmonic number
identities with polynomials or rational functions in $k$ \cite{SLL89,S90}.
The identity (\ref{harmonictrad2}) for nonnegative integer $m$
was already listed and proved in literature \cite{GKP94,K97}.
Using the same identities but now using (\ref{genharmidef}) yields the following
more general identity.\\
For nonnegative integer $n$, $p$ and complex $m$:
\begin{equation}\label{harmonicext2}
\begin{split}
   & \sum_{k=0}^n \binom{m}{n-k}\binom{n+p}{p+k}^{-1}H_{p+k} = \frac{n+p+1}{n+p-m+1} \\
   & \cdot \left[ H_{n+p+1}-\frac{1}{n+p-m+1}
  - \binom{m}{n+1}\binom{n+p+1}{p}^{-1} ( H_p - \frac{1}{n+p-m+1} ) \right]
\end{split}
\end{equation}
When $p=0$ this identity reduces to (\ref{harmonictrad3}).

\section{More Combinatorial Identities with Harmonic\\ Numbers}

More combinatorial identities involving harmonic numbers
may be derived from (\ref{binomial1w1}) to (\ref{binomial4w1})
using $d/dx\Gamma(x)=\Gamma(x)\psi(x)$ where $\psi(x)$ is the digamma function,
and using $\psi(x+n+1)-\psi(x+1)=H^{(1)}_{x,n}$ \cite{AAR99}.
Then the harmonic numbers are linked to binomial coefficients in the following way.\\
\begin{equation}
 \frac{d}{dx} H_{x+y,n}^{(m)} = -m H_{x+y,n}^{(m+1)}
\end{equation}
\begin{equation}
 \frac{d}{dx} \binom{x+y}{n}^w = w \binom{x+y}{n}^w H_{x+y-n,n}^{(1)} 
 = w \binom{x+y}{n}^w ( H_{x+y}-H_{x+y-n} )
\end{equation}
\begin{equation}
 \frac{d}{dx} \binom{n}{x+y}^w = w \binom{n}{x+y}^w H_{x+y,n-2(x+y)}^{(1)} 
 = w \binom{n}{x+y}^w ( H_{n-(x+y)}-H_{x+y} )
\end{equation}
Differentiating (\ref{binomial1w1}) to $y$ yields:
\begin{equation}
 \sum_{k=0}^n \binom{m+k}{m}H_k = \binom{n+m+1}{n}(H_n-\frac{1}{m+1})+\frac{1}{m+1}
\end{equation}
Differentiating (\ref{binomial2w1}) to $x$ yields:
\begin{equation}\label{more2}
 \sum_{k=0}^n (-1)^k\binom{n+m}{m+k}H_{m+k} = \binom{n+m}{n}(\frac{m}{n+m}H_m - \frac{n}{(n+m)^2})
\end{equation}
Differentiating (\ref{binomial2w1}) to $y$ and using (\ref{more2}) yields:
\begin{equation}\label{more3}
 \sum_{k=0}^n (-1)^k\binom{m}{k}H_k = (-1)^n \binom{m-1}{n}(H_n+\frac{1}{m})-\frac{1}{m}
\end{equation}
Differentiating (\ref{binomial3w1}) to $y$ and using (\ref{harmonictrad4}) yields:
\begin{equation}
\begin{split}
 & \sum_{k=0}^n (-1)^k\binom{n+m}{m+k}^{-1}H_{m+k} \\
 ={} &  \frac{n+m+1}{n+m+2} \left[ \binom{n+m+1}{m}^{-1} ( H_m-\frac{1}{n+m+2} )
  + (-1)^n ( H_{n+m+1}-\frac{1}{n+m+2} ) \right]
\end{split}
\end{equation}
Differentiating (\ref{binomial4w1}) to $x$ yields:
\begin{equation}
 \sum_{k=0}^n \binom{n}{k}\binom{n+m}{m+k}^{-1} H_k 
 = \frac{n+m+1}{m+1}\left[ H_n+\frac{1}{m+1}(\binom{n+m+1}{n}^{-1}-1) \right]
\end{equation}
The special cases $m=0$ of (\ref{more2}) and $m=n$ of (\ref{more3}) give the
well-known result for $n>0$ \cite{G61}:
\begin{equation}
 \sum_{k=0}^n (-1)^k \binom{n}{k} H_k = -\frac{1}{n}
\end{equation}

\pdfbookmark[0]{References}{}


\begin{thebibliography}{99}
\bibitem{AS70}
  M. Abramowitz, I.A. Stegun,
  \textit{Handbook of Mathematical Functions},
  Dover, 1970.
\bibitem{AAR99}
  G.E. Andrews, R. Askey, R. Roy,
  \textit{Special Functions},
  Cambridge University Press, 1999.
\bibitem{C12}
  W. Chu,
  Summation formulae involving harmonic numbers,
  \textit{Filomat} 26~(2012)~143-152.
\bibitem{CD05}
  W. Chu, L. De Donno,
  Hypergeometric series and harmonic number identities,
  \textit{Adv. in Appl. Math.} 34~(2005)~123-137.
\bibitem{CF09}
  W. Chu, A.M. Fu,
  Dougall-Dixon formula and harmonic number identities,
  \textit{Ramanujan J.} 18~(2009)~11-31.
\bibitem{CY08}
  W. Chu, Q. Yan,
  Combinatorial Identities on Binomial Coefficients and Harmonic Numbers,
  \textit{Util. Math.} 75~(2008)~51-66
\bibitem{G61}
  H.W. Gould,
  Some Relations Involving the Finite Harmonic Series,
  \textit{Math. Mag.} 34~(1961)~317-321.
\bibitem{G72}
  H.W. Gould,
  \textit{Combinatorial Identities}, rev. ed.,
  Morgantown, 1972.
\bibitem{G04}
  H.W. Gould,
  Sums of Powers of Binomial Coefficients via Legendre Polynomials,
  \textit{Ars Combin.} 73~(2004)~33-43.
\bibitem{G08}
  H.W. Gould,
  Sums of Powers of Binomial Coefficients via Legendre Polynomials, Part 2,
  \textit{Ars Combin.} 86~(2008)~161-173.
\bibitem{GS97}
  H.W. Gould, H.M. Srivastava,
  Some Combinatorial Identities Associated with the Vandermonde Convolution,
  \textit{Appl. Math. Comput.} 84~(1997)~97-102. 
\bibitem{GR07}
  I.S. Gradshteyn, I.M. Ryzhik,
  \textit{Table of Integrals, Series and Products}, 7th ed.,
  Academic Press, 2007.
\bibitem{GKP94}
  R.L. Graham, D.E. Knuth, O. Patashnik,
  \textit{Concrete Mathematics, A Foundation for Computer Science}, 2nd ed.,
  Addison-Wesley, 1994.
\bibitem{K97}
  D.E. Knuth,
  \textit{The Art of Computer Programming, Volume 1: Fundamental Algorithms}, 3rd ed.,
  Addison-Wesley, 1997.
\bibitem{LLF05}
  P.J. Larcombe, M.E. Larsen, E.J. Fennessey,
  On Two Classes of Identities Involving Harmonic Numbers,
  \textit{Util. Math.} 67~(2005)~65-80.
\bibitem{L07}
  M.E. Larsen,
  \textit{Summa Summarum},
  Peters, 2007.
\bibitem{M02}
  T. Mansour,
  Combinatorial Identities and Inverse Binomial Coefficients,
  \textit{Adv. in Appl. Math.} 28~(2002)~196-202.
\bibitem{PS03}
  P. Paule, C. Schneider,
  Computer proofs of a new family of harmonic number identities,
  \textit{Adv. in Appl. Math.} 31~(2003)~359-378.
\bibitem{PWZ96}
  M. Petkov\v{s}ek, H.S. Wilf, D. Zeilberger,
  \textit{A=B}, Peters, 1996.
\bibitem{RW04}
  L. R\aa de, B. Westergren,
  \textit{Mathematics Handbook for Science and Engineering}, 5th ed.,
  Springer, 2004.
\bibitem{SLL89}
  D.Y. Savio, E.A. Lamagna, S.M. Liu,
  Summation of Harmonic Numbers, in
  \textit{Computers and Mathematics},
  eds. E. Kaltofen, S.M. Watt,
  Springer, 1989.
\bibitem{S90}
  J. Spie\ss,
  Some Identities Involving Harmonic Numbers,
  \textit{Math. Comp.} 55~(1990)~839-863.
\bibitem{Sip}
  R. Srivastava,
  Some families of combinatorial and other series identities
  and their applications,
  \textit{Appl. Math. Comput.} 218~(2011)~1077-1083.
\bibitem{WW06}
  E.T. Whittaker, G.N. Watson,
  \textit{A Course of Modern Analysis}, 4th ed.,
  Cambridge University Press, 2006.
\bibitem{WTS00}
  T.-C. Wu, S.-T. Tu, H.M. Srivastava,
  Some Combinatorial Series Identities Associated with
  the Digamma Function and Harmonic Numbers,
  \textit{Appl. Math. Lett.} 13~(2000)~101-106
\end{thebibliography}
\end{document}